
\documentclass[a4paper]{article}
\usepackage{a4, ams, amsmath, amssymb, graphics, subfigure, fullpage, color}%
\usepackage[pdftex]{graphicx}
\usepackage{fancyhdr}
\usepackage{multirow}
\usepackage{amsthm}
\usepackage[colorlinks]{hyperref}
\hypersetup{colorlinks,citecolor=green,filecolor=black,linkcolor=blue,urlcolor=blue}

\newtheorem{corollary}{Corollary}
\newtheorem{proposition}{Proposition}

\newtheorem{example}{Example}

%
\usepackage{upgreek} 

\newcommand{\MyBib}{D:/Users/fvrins/Documents/LaTeX/MyBib}

\linespread{1.00}

\usepackage{lscape}

\newcommand{\InclTDC}

\numberwithin{equation}{section}
\numberwithin{theorem}{section}
\numberwithin{corollary}{section}
\numberwithin{definition}{section}
\numberwithin{lemma}{section}
\numberwithin{remark}{section}
\numberwithin{example}{section}




\DeclareMathOperator{\E}{\mathbb{E}}
\renewcommand{\Q}{\mathbb{Q}}


\DeclareMathOperator{\e}{\textrm{e}}
\DeclareMathOperator{\ind}{1{\hskip -2.5 pt}\textrm{I}}

\newcommand{\beq}{\begin{equation}}
\newcommand{\eeq}{\end{equation}}
\newcommand{\beqn}{\begin{eqnarray}}
\newcommand{\eeqn}{\end{eqnarray}}
\newcommand{\bfig}{\begin{figure}}
\newcommand{\efig}{\end{figure}}
\newcommand{\btab}{\begin{table}}
\newcommand{\etab}{\end{table}}


\ifdefined \up 
	\renewcommand{\up}{\textrm{up}}
\else 
	\DeclareMathOperator{\up}{\textrm{up}} 
\fi


\title{Characteristic Function of Time-Inhomogeneous L\'evy-Driven Ornstein-Uhlenbeck Processes}


\author{Fr\'ed\'eric Vrins\\ Louvain School of Management (LSM)\\ \&\\ Center for Operations Research and Econometrics (CORE)\\ Universit\'e catholique de Louvain\\Chauss\'ee de Binche 151, Office A.212, B-7000 Mons, Belgium\\ \href{mailto:frederic.vrins@uclouvain.be}{frederic.vrins@uclouvain.be}}


\begin{document}
\maketitle

\begin{abstract}
Distributional properties -including Laplace transforms- of integrals of Markov processes received a lot of attention in the literature. In this paper, we complete existing results in several ways. First, we provide the analytical solution to the most general form of Gaussian processes (with non-stationary increments) solving a stochastic differential equation. We further derive the characteristic function of integrals of L\'evy-processes and L\'evy driven Ornstein-Uhlenbeck processes with time-inhomogeneous coefficients based on the characteristic exponent of the corresponding stochastic integral. This yields a two-dimensional integral which can be solved explicitly in a lot of cases. This applies to integrals of compound Poisson processes, whose characteristic function can then be obtained in a much easier way than using joint conditioning on jump times. Closed form expressions are given for gamma-distributed jump sizes as an example.
\end{abstract}








%
\section{Introduction} 
%
Integrals of Markov processes are popular stochastic processes as a results of their numerous applications. Birth, death, bird-death and catastrophe processes in particular received a lot of attention in the context of queuing and storage problems~\cite{Moran59} as well as in biology~\cite{Mang94} or robotics~\cite{Khaluf12}. From a broad perspective, general properties of integrals of Markov processes have been derived, like the time-evolution of associated moments~\cite{Gill92}. With regards to the Laplace transforms in specific, closed form expressions have been derived in~\cite{Poll02} for continuous-time Markov chains taking value on the set of positive integers. With regards to diffusion processes, very general results have been obtained for affine, quadratic and geometric models. The corresponding results are derived from the transition probabilities and the property that the infinitesimal generator is dependent on the space variable only, i.e the stochastic differential equations (SDE) has time-homogeneous coefficients. Explicit formulas for many standard processes including the integrated Orntein-Uhlenbeck (OU), Square-Root and Jacobi diffusions are available in~\cite{Alba05},\cite{Hurd08}. The specific case of integral of geometric Brownian motion, closely linked to squared Bessel processes, has extensively be studied by Yor~\cite{Yor92}, and a collection of papers on the topic is available~\cite{Yor01}. 

In spite of this extensive literature, some important results are still lacking for interesting stochastic processes with sound applications. This is for instance the case of the generalized OU process, that is OU processes with time-varying coefficients and L\'evy driving process. Similarly, to the best of our knowledge, there is no such results for compound Poisson processes. 

In this paper, we are interested in the characteristic function (or equivalently, Laplace transform or moment-generating function -when it exists) of the path integral of the form
\beq
\Lambda_{s,t}=\int_s^t\lambda_u du
\eeq
where $\lambda=(\lambda_t)_{t\geq 0}$ is a Markov process on a probability space $(\Omega,\mathbb{F},\mathcal{F}_t,\Q)$. We assume the sigma-field $\mathcal{F}_t$ contains all the information about $(\lambda,\Lambda)$ up to time $t$. 

The paper is organized as follows. Section 2 is devoted to depict financial applications that will stress the importance of having the characteristic function available in closed form. We shall point out some processes $\lambda_t$ for which the integral is useful in this context. Associated characteristic functions will be derived in the remaining part of the paper. Most general results for Gaussian processes (not necessary with stationary increments) is addressed in Section 3. In Section 4, the corresponding results are obtained for Ornstein-Uhlenbeck processes with a background driving L\'evy process (BDLP) which is not restricted to be a Brownian motion. Finally, Compound Poisson processes are investigated in Section 5 based on joint conditioning of jump times of the underlying Poisson process. Closed-form expressions are given when the jump sizes are Gamma-distributed. 
%
\section{Financial motivation}
%
The most obvious example of the use of integrals of stochastic processes in finance is the pricing of Asian options, in which the payoff depends on the time average of the underlying stock (or index) price. When the underlying stock follows a geometric Brownian motion, as in the standard Black-Scholes case, this leads to the study of integrals of Squared Bessel processes and explains why this specific case received so much attention~(see e.g.~\cite{Geman93},~\cite{Carr04}). In this context, Laplace transforms are interesting when explicit formula for the distributions are too involved or cannot be found. The corresponding distributions can be obtained by simple Fourier inversion, and all the moments are easily retreived by deriving the moment generating function. In some cases however, the explicit form of the characteristic function is very appealing in itself, as it direcly yields the calibration equation, as we now show.

Let $\Q$ stands for the risk-neutral measure and $\lambda_t$ for the risk-free short rate. Then, the time-$t$ price of a risk-free zero-coupon bond paying 1 unit of currency at time $T>t$ is
\beq
B(t,T)=\E[\e^{-\Lambda_{t,T}}|\mathcal{F}_t]\label{eq:ZCB}
\eeq
Consequently, the parameters of any given short rate process can be calibrated at time $t=0$ from the given yield curve $B(0,T)$ by making sure that the Laplace transform $\phi_{\Lambda_{T}}(x):=\E[\e^{-x\Lambda_T}|\mathcal{F}_0]$ of $\Lambda_{T}:=\Lambda_{0,T}$ satisfies
\beq
B(0,T)=\phi_{\Lambda_{T}}(1)
\eeq
which is related to the characteristic function $\varphi_{\Lambda_{T}}(x)$ via $\varphi_{\Lambda_{T}}(x):=\phi_{\Lambda_{T}}(-ix)$.

For tractability reasons, the most popular short-rate models are, by far, Gaussian or square-root diffusion processes with constant coefficients, possibly shifted by a deterministic function. For simple short-rate models like Vasicek (with Ornstein-Uhlenbeck dynamics, OU) or Cox-Ingersoll-Ross (with square root diffusion dynamics, SRD), the analytical expression of the Laplace transform is available; see e.g. \cite{Brigo06},\cite{Cox85a},\cite{Cox85b},\cite{Dufresne90}. However, the corresponding expressions for the Hull-White model, which is the extension of the Vasicek model with time-varying coefficients, is not available.

Expressions similar to~\ref{eq:ZCB} appear in credit risk modelling, in both reduced form and structural approaches (see e.g.~\cite{Lando04},\cite{Biel11}). In reduced form (intensity) approaches, $\tau$ is modeled as the first jump of a Cox process $N_t$ where the stochastic intensity is given by a non-negative stochastic process $\lambda_t$:
\beq
\tau=\inf \{t:N_t>0\}
\eeq
Conditional on the path of the intensity $\lambda$, $N_t$ is a Poisson process, so that
\beq
\Q(\tau>T)=\Q(N_T=0)=\E\left[\e^{-\int_0^T\lambda_s ds}\right]=\E\left[\e^{-\Lambda_T}\right]=\phi_{\Lambda_{T}}(1)\label{eq:cox}
\eeq
The same form of survival probability is obtained in the so-called structural approach. In this setup, the default is modeled as the first passage time of an asset process below a liability threshold $K$:
\beq
\tau=\inf \{t:V_t< K\}\Leftrightarrow \{t:\e^{-V_t}\geq K'\}
\eeq
The process $V_t$ is typically either a Brownian motion, a geometric Brownian motion or a subordinated process. The computation of the law of the first-passage times can be avoided by working in a specific framework where the survival probability takes the same form as in~(\ref{eq:cox}). Let $\Lambda_t$ be a non-decreasing grounded positive process and model the credit event as the first time where $\e^{-\Lambda_t}$ reaches a random threshold $U$ uniformly distributed in $[0,1]$. The passage time is almost surely unique, so that:
\beq
\tau=\inf \{t:\e^{-\Lambda_t} \leq U\}=  \{t:\e^{-\Lambda_t} = U\}
\eeq
Then, the survival probability $\Q(\tau>T)$ takes again the same form:
\beq
\Q(\tau> T)=\Q(\e^{-\Lambda_T} > U)=\E[\Q(\e^{-\Lambda_T} \geq U)]=\E[\e^{-\Lambda_T}]=\phi_{\Lambda_{T}}(1)
\eeq
This suggests that instead of working with an intermediate intensity process, we could directly feed the default model with an non-decreasing L\'evy process $\Lambda_t$. This setup has been used in~\cite{Hull08} in the context of CDO pricing but in a purely numerical setup. The analytical expressions of the calibration equations are derived in~\cite{Vrins11a}. The implied copulas form the class of so-called \textit{Sibuya copulas}, which properties are studied in~\cite{Vrins13}.

This setup, however is not appropriate when one desires to simulate survival probability curves $S(t,T)$. Indeed, 
\beq
\Q(\tau> T|\tau>t)=\E[\e^{-(\Lambda_T-\Lambda_t)}|\mathcal{F}_t]=\E[\e^{-\Lambda_{t,T}}]
\eeq
If the $\Lambda_t$ process is L\'evy, the increments are independent of the past and if they are stationary, $\Lambda_{T}-\Lambda_{t}\sim\Lambda_{T-t}$. Therefore, with $\delta=T-t$
\beq
S(t,t+\delta):=\Q(\tau> t+\delta|\tau>t)=\E[\e^{-\Lambda_{\delta}}]=\phi_{\Lambda_{\delta}}(1)
\eeq
In other words, there is no memory effect. Producing different curves $S(t,t+\delta)$, depending on the path $\Lambda_u,~u\in[0,t]$, can only be introduced by considering non-L\'evy processes for $\Lambda_t$. One way to introduce such dependency is by considering $\Lambda_t$ as the integral of a underlying process $\lambda_t$, as in the intensity process. 
%
\section{Gaussian processes}  
%
As mentioned in the introduction, the expression of integrals of Gaussian processes are known in some specific cases, among which Vasicek (Ornstein-Uhlenbeck) is the most popular one. We generalise here previous results by working out the explicit form of the strong solutions for $\lambda_t$ and $\Lambda_{s,t}$ given $\lambda_s$ when $\lambda_t$ is the solution to the most general SDE associated to Gaussian processes. 

It is very well-known that Gaussian process solving a stochastic differential equation (SDE) has dynamics of the form\footnote{Not all Gaussian processes solve a SDE; this is for instance the case of fractional Brownian motion.}
\beq
d\lambda_t=(\alpha(t)-\beta(t)\lambda_t)dt+\sigma(t)dW_t~~~,~t\geq s,~~\lambda_s\label{eq:SDEg}
\eeq
where $W$ is a Brownian motion and $\alpha(t),\beta(t)$ and $\sigma(t)$ are deterministic integrable functions and $\lambda_s$ the initial condition. When the solution exists, it is possible to derive explicitly the strong solution for $\lambda_t$ and $\Lambda_t$ and obtain the corresponding characteristic function. We assume in this paper that the coefficients here are such that existence and uniqueness conditions are satisfied; see~e.g. \cite{Kara05} or~\cite{Osk03} for a detailed discussion.
\begin{proposition}[General Gaussian Process]\label{prop:gauss} Consider eq.(\ref{eq:SDEg}) and assume existence and uniqueness conditions are met. Suppose further that $\lambda_s$ is known at time $s\leq t$ and define
\beqn
G(s,t):=\e^{-\int_s^t\beta(u)du}~~~~~~~~~~&~~,~~&I(s,t):=\int_{s}^t \alpha(u)G(u,s)du\\
J(s,t):=\int_{s}^t \sigma(u)G(u,s)dW_u&~~,~~&K(s,t):=\int_s^t \sigma(s)G(s,u)du
\eeqn
Then, for any $t\geq s$, the solution to eq.(\ref{eq:SDEg}) conditional upon $\lambda_s$ is
\beq
\lambda_{t}=m(s,t)+G(s,t)J(s,t)\label{eq:lambdaGauss}
\eeq
which is Normally distributed with mean and variance respectively given by
\beqn
m(s,t)&:=&G(s,t)(\lambda_s+I(s,t))\\
v(s,t)&:=&G^2(s,t)\int_s^t \sigma^2(u)G^2(u,s)du=\int_s^t \sigma^2(u)G^2(u,t)du
\eeqn
The integral $\Lambda_{s,t}$ is a shifted Ito integral
\beq
\Lambda_{s,t}=M(s,t)+\int_s^tK(u,t)dW_u\label{eq:LambdaGauss}
\eeq
which is Normally distributed with mean and variance respectively given by
\beqn
M(s,t)&:=&(t-s)\lambda_s+\int_s^t(t-u)\left(\alpha(u)-\beta(u)G(s,u)(\lambda_s+I(s,u))\right)du\\
V(s,t)&:=&\int_s^tK^2(u,t)du
\eeqn
\end{proposition}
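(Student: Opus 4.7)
The plan is to treat \eqref{eq:SDEg} as a linear SDE and apply the standard integrating factor technique, then use stochastic Fubini to pass to $\Lambda_{s,t}$. The key algebraic identity underpinning every step is the multiplicative property
$G(s,t)=1/G(t,s)$ and $G(r,s)G(s,u)=G(r,u)$,
which follows directly from $G(a,b)=\exp(-\int_a^b\beta(u)du)$.

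First I would derive \eqref{eq:lambdaGauss}. Applying It\^o's formula to $Y_t:=\lambda_t/G(s,t)=\lambda_tG(t,s)$, the drift terms involving $\beta$ cancel and one is left with
\[
dY_t=\frac{\alpha(t)}{G(s,t)}\,dt+\frac{\sigma(t)}{G(s,t)}\,dW_t=\alpha(t)G(t,s)\,dt+\sigma(t)G(t,s)\,dW_t.
\]
Integrating from $s$ to $t$ and multiplying back by $G(s,t)$ yields
$\lambda_t=G(s,t)\lambda_s+G(s,t)I(s,t)+G(s,t)J(s,t)=m(s,t)+G(s,t)J(s,t)$.
Since the integrand of $J(s,t)$ is deterministic, $\lambda_t$ is Gaussian. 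The mean $m(s,t)$ is immediate, and the variance follows from the It\^o isometry: $v(s,t)=G^2(s,t)\int_s^t\sigma^2(u)G^2(u,s)\,du$. The second expression for $v(s,t)$ is obtained by pulling $G^2(s,t)$ inside and invoking $G(s,t)G(u,s)=G(u,t)$.

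Next I would handle $\Lambda_{s,t}$ by integrating the representation of $\lambda_u$ over $[s,t]$:
\[
\Lambda_{s,t}=\int_s^t m(s,u)\,du+\int_s^t G(s,u)J(s,u)\,du.
\]
For the stochastic part I would apply the stochastic Fubini theorem to exchange the orders of the $du$ and $dW_r$ integrals:
\[
\int_s^t G(s,u)\!\int_s^u\!\sigma(r)G(r,s)\,dW_r\,du
=\int_s^t\sigma(r)\!\left(\int_r^t G(s,u)G(r,s)\,du\right)dW_r
=\int_s^t K(r,t)\,dW_r,
\]
using once more $G(s,u)G(r,s)=G(r,u)$ to recognise $\sigma(r)\int_r^t G(r,u)du=K(r,t)$. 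The variance formula $V(s,t)=\int_s^tK^2(u,t)\,du$ is then the It\^o isometry applied to this integral, and Gaussianity of $\Lambda_{s,t}$ follows because the integrand is deterministic.

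Finally, I would identify the deterministic part $\int_s^t m(s,u)du$ with $M(s,t)$. Observing that $m(s,u)$ solves the deterministic ODE $\partial_u m(s,u)=\alpha(u)-\beta(u)m(s,u)$ with $m(s,s)=\lambda_s$, one writes $m(s,u)=\lambda_s+\int_s^u[\alpha(r)-\beta(r)m(s,r)]dr$ and substitutes; a (standard, non-stochastic) Fubini swap on the iterated integral produces the weight $(t-u)$ and gives
\[
\int_s^t m(s,u)\,du=(t-s)\lambda_s+\int_s^t(t-u)\bigl[\alpha(u)-\beta(u)G(s,u)(\lambda_s+I(s,u))\bigr]du=M(s,t).
\]
The main obstacle I anticipate is purely bookkeeping: keeping the many compositions of $G(\cdot,\cdot)$ straight and justifying stochastic Fubini, which is immediate here because the integrand $\sigma(r)G(s,u)G(r,s)\mathbf{1}_{\{r\le u\}}$ is deterministic and square-integrable on $[s,t]^2$ under the existence/uniqueness hypotheses.
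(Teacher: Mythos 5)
Your proof is correct, but it reaches \eqref{eq:LambdaGauss} by a different route than the paper. The paper's proof starts from the integration-by-parts identity $\Lambda_{s,t}=t\lambda_t-s\lambda_s-\int_s^t u\,d\lambda_u=(t-s)\lambda_s+\int_s^t(t-u)\,d\lambda_u$, substitutes the SDE for $d\lambda_u$, and is then still forced to perform a swap of integration order on the residual double stochastic integral $\int_s^t(t-u)\beta(u)G(s,u)J(s,u)\,du$ before the kernel $K(u,t)$ emerges (after an additional deterministic integration by parts showing $(t-u)-\int_u^t(t-r)\beta(r)G(u,r)\,dr=\int_u^t G(u,r)\,dr$). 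You instead integrate the strong solution $\lambda_u=m(s,u)+G(s,u)J(s,u)$ directly and apply stochastic Fubini once to $\int_s^t G(s,u)J(s,u)\,du$, using $G(s,u)G(r,s)=G(r,u)$ to read off $K(r,t)$ immediately; your treatment of the deterministic part via the ODE $\partial_u m(s,u)=\alpha(u)-\beta(u)m(s,u)$ and an ordinary Fubini swap reproduces the paper's form of $M(s,t)$ exactly. The two arguments are logically equivalent (the paper's $dW_v\,du$ interchange is a stochastic Fubini in disguise), but yours is more economical, isolates the single analytic tool that needs justification (square-integrability of the deterministic kernel on $[s,t]^2$), and avoids the bookkeeping of the intermediate $\int(t-u)\beta(u)G(s,u)J(s,u)\,du$ term; the paper's version has the minor virtue of exhibiting the weight $(t-u)$ and hence $M(s,t)$ without any extra computation. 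The remaining steps --- Gaussianity from deterministic integrands and the It\^o isometry for $v(s,t)$ and $V(s,t)$ --- coincide in both proofs.
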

\begin{proof} See Appendix~\ref{proof:prop:gauss}.\end{proof}
Since $\Lambda_{s,t}$ is Gaussian with known mean and variance, the following corollary is obvious.
\begin{corollary}[Laplace transform of $\Lambda_{s,t}$]
The Laplace transform of $\Lambda_{s,t}$, conditional upon $\lambda_s$, is
\beq
\phi_{\Lambda_{s,t}}(x)=\exp\left\{-M(s,t)x+\frac{x^2V(s,t)}{2}\right\}\label{eq:GaussPhi}
\eeq
where $M(s,t),V(s,t)$ are given in Proposition~\ref{prop:gauss}.
\end{corollary}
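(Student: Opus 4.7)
The plan is to invoke the distributional characterization already secured in Proposition~\ref{prop:gauss}. Conditional on $\lambda_s$, the integral $\Lambda_{s,t}$ is written in~(\ref{eq:LambdaGauss}) as the sum of a deterministic (given $\lambda_s$) term $M(s,t)$ and an Ito integral $\int_s^t K(u,t)dW_u$ of a deterministic integrand against Brownian motion. The latter is a centred Gaussian random variable with variance $\int_s^t K^2(u,t)du=V(s,t)$ by Ito's isometry, so $\Lambda_{s,t}\mid\mathcal{F}_s\sim\mathcal{N}(M(s,t),V(s,t))$.

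Once Gaussianity is in hand, I would simply apply the textbook formula for the moment generating function of a Normal law. Concretely, for $X\sim\mathcal{N}(\mu,\sigma^2)$ one has $\E[\e^{aX}]=\exp\{a\mu+a^2\sigma^2/2\}$, obtained by completing the square in the Gaussian density integral. Substituting $a=-x$, $\mu=M(s,t)$ and $\sigma^2=V(s,t)$ yields~(\ref{eq:GaussPhi}).

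There is essentially no obstacle here: the only point requiring a bit of care is the implicit conditioning on $\lambda_s$, since $M(s,t)$ depends on the initial value $\lambda_s$ through the term $(t-s)\lambda_s+\int_s^t(t-u)\beta(u)G(s,u)(\lambda_s+I(s,u))du$, whereas $V(s,t)$ is purely deterministic. Keeping the conditioning on $\mathcal{F}_s$ explicit throughout, the Laplace transform formula holds pathwise in $\lambda_s$, which is exactly what the statement asserts.
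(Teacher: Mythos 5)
Your proof is correct and follows exactly the paper's route: the paper likewise treats the corollary as an immediate consequence of the Gaussianity of $\Lambda_{s,t}$ established in Proposition~\ref{prop:gauss}, with the Normal moment generating function evaluated at $a=-x$. (Only a cosmetic remark: in your aside the $\lambda_s$-dependent part of $M(s,t)$ should carry a minus sign in front of $\beta(u)G(s,u)(\lambda_s+I(s,u))$, but this does not affect the argument.)
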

Let us apply this result to get the expression and distribution of $\Lambda_{s,t}$ in some particular cases.
\begin{example}[Integrated rescaled Brownian motion]\label{ex:RescBM}
Consider the case where $\lambda_t=\sigma W_t$. 
 
Then,
\beq
M(s,t)=(t-s)\lambda_s~,~K(s,t)=\sigma(t-s)~,~V(s,t)=\frac{\sigma^2}{3}(t-s)^3
\eeq
This expression can be found directly using the stochastic version of Fubini's theorem on indicator functions.
\end{example}
\begin{example}[Integrated OU process]\label{ex:OU}
The case where $\beta(t)$ and $\sigma(t)$ are constant is widely used in mathematical finance. It corresponds to $\alpha(t)=0 $, $\beta(t)=\beta$ and $\sigma(t)=\sigma$: 
\beqn
M(s,t)&=&\frac{1-\e^{-\beta(t-s)}}{\beta}\lambda_s\\
K(s,t)&=&\frac{\sigma}{\beta}\left(1-\e^{-\beta(t-s)}\right)\\
V(s,t)&=&\left(\frac{\sigma}{2\beta}\right)^2\left(2(t-s)+\frac{\e^{-\beta(t-s)}}{\beta}(4-\e^{-\beta(t-s)})-\frac{6}{\beta}\right)
\eeqn
\end{example}
The above examples correspond to special cases for which results exist (see e.g~\cite{Brigo06}). However, the above result allows to determine the Laplace transform of more general Gaussian processes, as shown in the examples below.
\begin{example}[Prototypical swap exposure]\label{ex:Bridge}
Consider a Brownian bridge from $0$ to $T$ rescaled by a constant $\sigma$ and shifted by the deterministic fontion $\gamma t(T-t)$. Such a process can be written as
\beq
\lambda_t=(T-t)\left(\gamma t+\sigma \int_{0}^t \frac{1}{T-u}dW_{u}\right)~~~,~0\leq s\leq t\leq T
\eeq
This process starts from 0, its expectation follows the curve $\gamma t(T-t)$ and then goes back to $0$ as $t\to T$. This is a Gaussian process with $\alpha(t)=\gamma (T-t)$, $\beta(t)=\frac{1}{T-t}$ zero-mean and instantaneous volatility $\sigma(t)=\sigma$. Then
\beq
M(s,T)=\lambda_t\frac{T-t}{2}+\frac{\gamma}{6}(T-t)^3~,~K(s,T)=\sigma\frac{T-s}{2}~,~V(s,T)=\frac{\sigma^2}{12}(T-s)^3
\eeq
\end{example}
See Appendix~\ref{proof:ex:Bridge} for details.
\begin{example}[Deterministically subordinated process]
Consider the integral $\Lambda_{s,t}=\int_s^t f(s)W_{\theta(s)}ds$ where $\theta(s)$ is a strictly increasing and continuous function. Then,
\beq
\Lambda_{s,t}=\int_{\theta(s)}^{\theta(t)} \frac{f\circ\theta^{-1}(u)}{\theta'(u)}W_{u}du
\eeq
corresponds to the integral of $\lambda_t:=\sigma(t)W_t$ where $\sigma(t):=\frac{f(\theta^{-1}(t))}{\theta'(t)}$. Its differential takes the form~(\ref{eq:SDEg}) where $\alpha(t):=0$, $\beta(t):=-\frac{\sigma'(t)}{\sigma(t)}$ and instantaneous volatility $\sigma(t)$.

Consequently, provided that the existence conditions are met, $\Lambda_{s,t}$ is Normally distributed and the solution is given by~(\ref{eq:LambdaGauss}) where the integration bounds are $s\leftarrow\theta(s)$, $t\leftarrow\theta(t)$ and the parameters are defined above, and the characteristic function follows.\footnote{Observe that when $\theta(t)$ is not strictly increasing (i.e. mereley non-decreasing) and/or discontinuous, the corresponding characteristic function can be recovered by splitting the integral into pieces where the inveres of $\theta(s)$ exists, where $\theta(s)$ is constant and at time points where $\theta(t)$ jumps.}
\end{example}
%
\section{L\'evy-driven Hull-White process}
%
In some cases, the characteristic function $\varphi_{\Lambda_t}(x)$ of the integrated process $\Lambda_t$ can be obtained by computing the characteristic function of a stochastic integral. This is in particular the case of generalized L\'evy-driven OU processes (that is, where the random increments are controlled by a background driving L\'evy process, BDLP), as we now show.

The log-characteristic function of a L\'evy process $X_t$ takes the form 
\beq
\psi_{X_t}(x):=\ln\varphi_{X_t}(x)=t\ln\varphi_{X}(x)=:t\psi_{X}(x)
\eeq
where $\psi_X(x)$ is the \textit{characteristic exponent} of the infinitely divisible distribution of $X:=X_1$~\cite{Cont04}. We can derive the log-characteristic function of the stochastic integral $Y_t$ of a deterministic function $\sigma(t)$ with respect to a L\'evy process $X-t$ as follows. Let $Y_t:=\int_0^t \sigma(s)dX_s$ where $\sigma(s)$ is integrable and $(X_t)_{t\geq 0}$ is L\'evy processes with triplet $(\mu,\sigma,\nu)$ and $\nu(z)$ is the density of the L\'evy  measure. Then
\beqn
\psi_{X}(x)&=&ix\mu+\frac{1}{2}\sigma x^2+\int_\mathbb{R}(\e^{ixz}-1-ixz\ind_{\{|z|<1\}})\nu(dz)\\
\psi_{Y_t}(x)&=&\ln\E[\e^{ix\int_0^t\sigma(s)dX_s}]
\eeqn
\begin{proposition}[Characteristic exponent of a stochastic integral with respect to a L\'evy process]\label{prop:stochint}
Let $(X_t)_{t\geq 0}$ be a L\'evy process with characteristic exponent $\psi_X(x)$ and define the semimartingale $Y_{s,t}=\int_s^t \sigma(u)dX_u$ for some deterministic integrable function $\sigma$. Then, 
\beq
\psi_{Y_{s,t}}(x)=\int_s^t \psi_X(\sigma(u)x)du
\eeq
where $\psi_{Y_{s,t}}(x):=\ln\varphi_{Y_{s,t}}(x)$.
\end{proposition}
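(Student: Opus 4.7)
The strategy is the classical one: verify the identity for simple (step-function) integrands, where it follows immediately from the independence and stationarity of the L\'evy increments, then pass to the limit in the mesh of the partition.

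First I would fix an arbitrary partition $s=u_0<u_1<\cdots<u_n=t$ of $[s,t]$ with mesh tending to zero, and introduce the step-function approximation $\sigma_n(u):=\sum_{i=0}^{n-1}\sigma(u_i)\,\ind_{[u_i,u_{i+1})}(u)$. For such integrands the stochastic integral reduces to a finite sum:
\[
Y^n_{s,t}:=\int_s^t\sigma_n(u)\,dX_u=\sum_{i=0}^{n-1}\sigma(u_i)\bigl(X_{u_{i+1}}-X_{u_i}\bigr).
\]

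Second I would compute $\psi_{Y^n_{s,t}}(x)$ explicitly. Independence of the L\'evy increments makes the characteristic function of $Y^n_{s,t}$ factorise across $i$, while stationarity yields $X_{u_{i+1}}-X_{u_i}\stackrel{d}{=}X_{u_{i+1}-u_i}$, so that the defining identity $\psi_{X_h}(y)=h\,\psi_X(y)$ gives $\E[\e^{ix\sigma(u_i)(X_{u_{i+1}}-X_{u_i})}]=\exp\bigl((u_{i+1}-u_i)\,\psi_X(\sigma(u_i)x)\bigr)$. Taking the logarithm of the product then produces
\[
\psi_{Y^n_{s,t}}(x)=\sum_{i=0}^{n-1}(u_{i+1}-u_i)\,\psi_X(\sigma(u_i)x),
\]
which is precisely a Riemann sum for $\int_s^t\psi_X(\sigma(u)x)\,du$.

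Third I would pass to the limit as the mesh shrinks to zero. On the left-hand side, standard approximation results for stochastic integrals against L\'evy semimartingales give $Y^n_{s,t}\to Y_{s,t}$ in probability as $\sigma_n\to\sigma$; since $|\e^{ixY^n_{s,t}}|\le 1$, bounded convergence yields $\varphi_{Y^n_{s,t}}(x)\to\varphi_{Y_{s,t}}(x)$ and hence convergence of the corresponding characteristic exponents (the principal logarithm is unambiguously selected by the L\'evy--Khintchine representation). On the right-hand side, continuity of $\psi_X$ combined with the assumed integrability of $\sigma$ ensures that the Riemann sums converge to $\int_s^t\psi_X(\sigma(u)x)\,du$. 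Equating the two limits yields the claim.

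The main obstacle is justifying the two limiting steps simultaneously under the stated deterministic integrability assumption on $\sigma$. Controlling $Y^n-Y$ relies on the independent-increments structure to bound the fluctuations of the difference in terms of $\sigma_n-\sigma$, while Riemann-integrability of $u\mapsto\psi_X(\sigma(u)x)$ requires enough regularity on $\sigma$ so that uniform (or $L^1$) approximation by step functions is available. Assuming $\sigma$ continuous makes the argument immediate; the general case is then covered by a standard density argument, which is the step where care is needed.
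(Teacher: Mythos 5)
Your proof is correct in outline, but it takes a genuinely different route from the paper's. The paper starts from the L\'evy--It\^o decomposition $X_t=\gamma t+\sigma W_t+\int_0^t\int_{\mathbb{R}}x\,J_X(ds\times dx)-\int_0^t\int_{|x|\le 1}x\,\nu(dx)\,ds$, integrates the deterministic function against each component separately, and evaluates the jump contribution with the exponential formula for Poisson random measures, $\E[\e^{\int g\,dJ_X}]=\e^{\int(\e^{g}-1)\,d\mu}$; collecting the drift, Gaussian and jump terms then reassembles the integrand into $\psi_X(x\sigma(u))$ pointwise in $u$, with no partition or limiting argument. Your discretization argument instead uses only the independence and stationarity of the increments together with the scaling $\psi_{X_h}=h\,\psi_X$, so it is more elementary in its prerequisites (no L\'evy--It\^o decomposition, no exponential/Campbell formula), at the price of two limit interchanges: convergence of the elementary stochastic integrals $Y^n_{s,t}\to Y_{s,t}$, convergence of the Riemann sums, and continuity of the distinguished logarithm (which you rightly note is unambiguous since characteristic functions of infinitely divisible laws never vanish). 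You also correctly flag the one delicate point: left-endpoint sampling $\sigma(u_i)$ is only meaningful for, say, continuous $\sigma$, and a merely integrable $\sigma$ must be handled by approximating with step functions in the appropriate norms ($L^1$ for the drift and compensator parts, $L^2$ for the Gaussian part) rather than by pointwise evaluation; this is where the remaining work sits, though it is no more severe than the regularity assumptions the paper itself leaves implicit. Both proofs are standard; the paper's is shorter once the exponential formula is granted, while yours makes the role of independent stationary increments more transparent.
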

\begin{proof} See Appendix \ref{proof:prop:stochint}\end{proof}
The L\'evy driven Hull-White process is defined as a Ornstein-Uhlenbeck process with time-varying coefficients and BDLP. 
\begin{proposition}[Characteristic exponent of integrated BDLP Hull-White process]\label{prop:Levy}
Let $(\lambda_t)_{t\geq 0}$ be a Hull-White process driven by the L\'evy process $X_t$ solution of the SDE
\beq
d\lambda_t=(\alpha(t)-\beta(t)\lambda_{t}) dt +\sigma(t)dX_t\label{eq:sdej}
\eeq
Then, setting $G(s,t):=\e^{-\int_s^t\beta(u)du}$ as before,
\beq
\psi_{\Lambda_{s,t}}(x)=ixM(s,t)+\int_s^t \psi_X\left(x\int_u^t \sigma(u)G(u,v)dv\right)du
\eeq
\end{proposition}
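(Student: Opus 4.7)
The plan is to reduce the problem to Proposition~\ref{prop:stochint} by first solving the SDE explicitly and then swapping the order of a double integral via a stochastic Fubini argument. First, I would treat~(\ref{eq:sdej}) as a linear SDE and multiply through by the deterministic integrating factor $G(s,t)^{-1}=\e^{\int_s^t\beta(u)du}$. Since $G$ is of finite variation no It\^o correction arises, and
\beq
d\bigl(G(s,t)^{-1}\lambda_t\bigr)=G(s,t)^{-1}\alpha(t)dt+G(s,t)^{-1}\sigma(t)dX_t.
\eeq
Integrating from $s$ to $t$ and using the identity $G(s,t)G(s,u)^{-1}=G(u,t)$ yields
\beq
\lambda_t=G(s,t)\lambda_s+\int_s^t G(u,t)\alpha(u)du+\int_s^t G(u,t)\sigma(u)dX_u.
\eeq

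Second, I would integrate this formula on $[s,t]$ to obtain $\Lambda_{s,t}$. The two purely deterministic pieces depend only on $\alpha$, $\beta$ and $\lambda_s$; a standard Fubini exchange in the Lebesgue double integral shows that they combine into precisely the function $M(s,t)$ already computed in Proposition~\ref{prop:gauss} (any drift carried by $X_t$ will be absorbed later through its characteristic exponent, so the expression for $M(s,t)$ is unchanged). The remaining random piece is the double integral $\int_s^t\!\int_s^v \sigma(u)G(u,v)dX_u\,dv$, which I would collapse via the stochastic Fubini theorem for semimartingales into
\beq
\int_s^t \sigma(u)\!\left(\int_u^t G(u,v)dv\right)dX_u=\int_s^t h(u)dX_u,\quad h(u):=\int_u^t \sigma(u)G(u,v)dv.
\eeq

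Third, since $h$ is deterministic and integrable, Proposition~\ref{prop:stochint} applies verbatim with $\sigma$ replaced by $h$, yielding $\psi_{\int_s^t h(u)dX_u}(x)=\int_s^t \psi_X(h(u)x)du$. Adding the pure-phase contribution $ixM(s,t)$ coming from the deterministic shift to the log-characteristic function produces the announced identity for $\psi_{\Lambda_{s,t}}(x)$.

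The main obstacle I anticipate is rigorously justifying the stochastic Fubini exchange in the second step: one must verify the joint integrability of the kernel $\sigma(u)G(u,v)$ on the triangle $\{(u,v):s\le u\le v\le t\}$ in the sense required by the L\'evy--It\^o theory. Under the paper's blanket integrability hypothesis on $\sigma$ together with local boundedness of $G$, this is a routine check (the classical statement for semimartingale integrators in Protter suffices), but it is the one analytic point that requires more than formal symbol manipulation.
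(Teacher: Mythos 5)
Your proposal is correct and follows essentially the same route as the paper: both reduce $\Lambda_{s,t}$ to the representation $M(s,t)+\int_s^t K(u,t)\,dX_u$ with $K(u,t)=\sigma(u)\int_u^t G(u,v)\,dv$ and then invoke Proposition~\ref{prop:stochint} to convert the deterministic-integrand stochastic integral into $\int_s^t\psi_X(xK(u,t))\,du$. The only difference is bookkeeping: you obtain the kernel by integrating the strong solution and applying stochastic Fubini on the triangle $\{s\le u\le v\le t\}$, whereas the paper reaches the same expression through the integration-by-parts manipulation of Appendix~\ref{proof:prop:gauss}; these are interchangeable, and your closing remark about justifying the Fubini exchange is the right (routine) point to flag.
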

\begin{proof} Using the integration by parts technique used in the proof of Proposition~\ref{prop:gauss}, the solution $\Lambda_{s,t}$ is proven to be the same as in~(\ref{eq:LambdaGauss}) where $W_t$ is replaced by $X_t$:
\beqn
\Lambda_{s,t}=M(s,t)+\int_s^tK(u,t)dX_u
\eeqn
Therefore, 
\beq
\psi_{\Lambda_{s,t}}(x)=ixM(s,t)+\psi_{\int_s^tK(u,t)dX_u}(x)
\eeq
and the claim follows from Proposition~\ref{prop:stochint} and the expression of $K(s,t)$ in Proposition~\ref{prop:gauss}.
\end{proof}
Observe that SDE~(\ref{eq:sdej}) is not the most general SDE with linear drift $\alpha(t)-\beta(t)x$ since the way jumps are handled corresponds to the special case where a jump of $X$ at $t$ of size $z$ triggers a jump of size $\sigma(t)z$ at $t$ for $\lambda_t$. We could perfectly want to have a jump $\lambda_t-\lambda_{t_{-}}$ which does depend on $(\lambda_t,z)$ in a non-linear way (see e.g.~\cite{Bass04} and~\cite{BarndNiel01} for a more advanced discussion and conditions on the coefficients $\alpha(t),\beta(t),\sigma(t)$ and the functional parameters of $X_t$ for existence and pathwise uniqueness).
\begin{example}[OU driven by a Brownian Motion]
We recover the characteristic function of the standard OU model driven by a Brownian motion from the strong solution
\beq
\Lambda_{s,t}=M(s,t)+\int_s^tK(u,t)dW_u
\eeq
where $K(u,t)=\frac{\sigma}{\beta}(1-\e^{-\beta(t-u)})$. Since here $X=W$, the L\'evy triplet is $(0,\sigma,0)$ and the characteristic exponent reduces to $\psi_{X}(x)=-x^2/2$. So,
\beq
\int_s^t \psi_{X}(K(u,t)x)du=-\int_s^t \frac{\left(xK(u,t)\right)^2}{2}du=-\frac{x^2}{2}V(s,t)
\eeq
Therefore,
\beq
\psi_{\Lambda_{s,t}}(x)=ix M(s,t)-\frac{x^2}{2}V(s,t)
\eeq
in line with $\ln \phi_{\Lambda_{s,t}}(-ix)$ in eq.~(\ref{eq:GaussPhi}).
\end{example}
Let us apply this method to determine the characteristic function of $\Lambda_t$ where $\lambda_t$ is a non-Gaussian OU process driven by a gamma process $\gamma_t$.
\begin{example}[OU driven by a gamma process]\label{ex:GammaOU}
Consider the SDE
\beq
d\lambda_t=-\beta\lambda_t dt +d\gamma_t
\eeq
with non-negative solution $\lambda_{t}=\left(\lambda_s+\int_s^t\e^{-\beta (s-u)}d\gamma_u\right)\e^{-\beta (t-s)}$ and the integral is
\beq
\Lambda_{s,t}=\lambda_{s}\frac{1-\e^{-\beta (t-s)}}{\beta}+\int_{s}^t\frac{1-\e^{-\beta (t-u)}}{\beta}d\gamma_u
\eeq
The integrated process $\Lambda_{t}$ is a deterministc constant plus a stochastic integral of a deterministic function $\sigma(u)$ with respect to a L\'evy process. Therefore,
\beq
\psi_{\Lambda_{t}}(x)=ix\lambda_{s}\frac{1-\e^{-\beta (t-s)}}{\beta}+\int_s^t \psi_{\gamma}\left(\frac{1-\e^{-\beta (t-u)}}{\beta}x\right)du
\eeq
where $\psi_\gamma(x)=\psi_\gamma(x;\kappa,\alpha):=\ln\left(\frac{\kappa}{\kappa-ix}\right)^\alpha$ is the characteristic exponent of the gamma distribution driving the jump sizes. The integral can be written in terms of the dilogarithmic function $Li_2(x)$
\beq
(t-s)\psi_\gamma(x/\beta)+\frac{\alpha}{\beta}\left(Li_2\left(v\right)-Li_2\left(v\e^{-\beta (t-s)}\right)\right)
\eeq
where $v=-\frac{ix}{\kappa\beta -ix}$.
\end{example}
A similar approach has been used in~\cite{Eber13}\footnote{We are grateful to D. Madan for providing us with this reference.} to evaluate a joint characteristic function in the $T$-forward measure of a triple of processes which dynamics are given in the risk-neutral measure. 

The following result is a straight consequence of Proposition~\ref{prop:Levy}.
\begin{corollary}[Characteristic exponent of an integrated L\'evy process] Let $(\lambda_t)_{t\geq 0}$ be a L\'evy process. Then,
\beq
\psi_{\Lambda_{s,t}}(x)=ix\lambda_s(t-s)+\int_s^t \psi_\lambda\left(x(t-u)\right)du
\eeq
\end{corollary}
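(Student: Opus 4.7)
The plan is to derive this corollary as an immediate specialization of Proposition~\ref{prop:Levy}. First I would observe that if $\lambda_t$ is itself a L\'evy process, then it trivially solves the Hull-White SDE~(\ref{eq:sdej}) with the degenerate choice $\alpha(t)\equiv 0$, $\beta(t)\equiv 0$, $\sigma(t)\equiv 1$, and BDLP $X_t=\lambda_t$ itself. These constant coefficients sit comfortably inside the integrability and existence framework under which Propositions~\ref{prop:stochint} and~\ref{prop:Levy} were established, so invoking Proposition~\ref{prop:Levy} in this setting is legitimate.

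Next, I would substitute these coefficients into each of the building blocks appearing in the statement of Proposition~\ref{prop:Levy}. With $\beta\equiv 0$ the exponential factor collapses to $G(s,t)\equiv 1$, so the inner integrand $\int_u^t\sigma(u)G(u,v)dv$ simplifies to $t-u$. Likewise, the mean term $M(s,t)$ defined in Proposition~\ref{prop:gauss} reduces to $(t-s)\lambda_s$, since the contribution involving $\alpha$ vanishes and the $\beta G$-term disappears as well. Finally, identifying $\psi_X$ with $\psi_\lambda$ (since $X=\lambda$) turns the formula of Proposition~\ref{prop:Levy} into exactly the stated expression.

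I do not anticipate a genuine obstacle: the whole argument is bookkeeping once Proposition~\ref{prop:Levy} is in hand. If anything, the only subtlety worth flagging is a one-line justification that the degenerate coefficients above genuinely meet the integrability hypotheses used in Proposition~\ref{prop:stochint}, which is immediate. Accordingly, the actual write-up would consist of roughly three lines of substitution.
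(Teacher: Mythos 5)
Your proposal is correct and is exactly the route the paper intends: the paper gives no separate proof, stating only that the corollary is ``a straight consequence of Proposition~\ref{prop:Levy}'', and your specialization $\alpha\equiv 0$, $\beta\equiv 0$, $\sigma\equiv 1$, $X=\lambda$ (so $G\equiv 1$, $K(u,t)=t-u$, $M(s,t)=(t-s)\lambda_s$) is precisely the bookkeeping that justifies that claim.
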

\begin{example}[Stochastically subordinated Brownian motion]
We are interested in the integral of $\lambda_t:=W_{X_t}$ that is, of the integral of a time-changed Brownian motion. Since for every L\'evy process $(X_t)_{t\geq 0}$ we have $\psi_{X_t}(x)=t\psi_X(x)$,
\beq
\varphi_{\lambda_t}(x)=\E\left[\e^{ix\lambda_t}\right]=\E\left[\left(\e^{ixW}\right)^{X_t}\right]=\E\left[\e^{X_t\psi_{W}(x)}\right]=\E\left[\e^{i X_t\left(-i\psi_{W}(x)\right)}\right]=\varphi_{X}^t\left(-i\psi_{W}(x)\right)
\eeq
or equivalently in terms of the characteristic exponent,
\beq
\psi_{\lambda}(x)=\psi_{X}\left(-i\psi_{W}(x)\right)=\psi_{X}\left(ix^2/2\right)
\eeq
In the case of a Gamma subordinator $X_t=\gamma_t$ with parameters $(\kappa,\alpha)$, the subordinated process is will-known to be a variance-gamma (L\'evy) process. Indeed, $\lambda_t$ has the same distribution as the difference of two independent gamma processes, say $\gamma^{(a)}_t$ and $\gamma^{(b)}_t$, each with same parameters $(\alpha,\sqrt{2\kappa})$. This can be seen by noting from the above result that $\varphi_{\lambda_t}(x)=\varphi_{\gamma}^t\left(ix^2/2\right)$:
\beq
\phi_{\lambda_t}(x)=\left(\frac{2\kappa}{2\kappa+x^2}\right)^{\alpha t}=\left(\frac{\sqrt{2\kappa}}{\sqrt{2\kappa}+ix}\right)^{\alpha t}\left(\frac{\sqrt{2\kappa}}{\sqrt{2\kappa}-ix}\right)^{\alpha t}
\eeq
The log-characteristic function of the integral $\Lambda_{s,t}$ of $\lambda_u$ is thus given by
\beqn
\psi_{\Lambda_t}(x)&=&ix\gamma_s(t-s)+\psi_{\int_s^t \gamma_u du}(x)+\psi_{\int_s^t \gamma_u du}(-x)\\
&=&ix\gamma_s(t-s)+\int_s^t\psi_{\gamma}(x(t-u))du+\int_s^t\psi_{\gamma}(x(u-t))du
\eeqn
and
\beq
\int_s^t\psi_{\gamma}(x(t-u))du=\frac{\kappa}{ix}\frac{\psi_\gamma(x(t-s);\kappa,\alpha)}{\varphi_\gamma(x(t-s);\kappa,1)}-\alpha(t-s)(1-\ln\kappa)
\eeq
\end{example}
%
\section{Compound Poisson processes}
%
In this section, we focus on the integral of jump processes where the jumps arrive according to a Poisson process $N=(N_t)_{t\geq 0}$ and the size of the $i$-th jump is given by the random variable $X_i$ which are mutually independent. We then specialize to the case of compound Poisson process where $X_i\sim X$. We compute the Laplace transform $\phi_{\Lambda_t}(x)$ for which the analytical expression can be found for some laws of $X$. This can be done in several ways. We adopt here on a joint conditioning on the number of jumps of $N$ by time $t$ and the corresponding jump times $\vec{T}=(T_1,\ldots,T_{N_t})$.
%
\subsection{Generalized Compound Process}
%
The Laplace transform $\phi_{\Lambda_t}(x)$ of the integrated process $\Lambda_t=\int_0^t\lambda_s ds$ where $\lambda_t=\sum_{i=0}^{N_t}X_i$ is a generalized compound Poisson process is obtained as an infinite sum of multidimensional integrals.
\begin{proposition} Let $N$ ba a Poisson process with intensity $\theta$ and $X_1,X_2,\ldots$ be a sequence of independent random variables. Then, the Laplace transform of the integrated process
\beq
\Lambda_t=\int_0^t\sum_{i=1}^{N_u}X_i du
\eeq
is given by
\beq
\phi_{\Lambda_t}(x)=\e^{-\theta t}\left(1+\sum_{n=1}^{\infty}\theta^n \int_{t_1=0}^t\phi_{X_1}(x(t-t_1))\ldots\int_{t_n=t_{n-1}}^t\phi_{X_n}(x(t-t_n))d\vec{t}_n\right)\label{eq:PhiGCP}
\eeq
\end{proposition}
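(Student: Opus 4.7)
The plan is to condition on the number of jumps $N_t$ and on the ordered vector of jump times $\vec{T}=(T_1,\ldots,T_{N_t})$ of the driving Poisson process, then exploit the classical fact that, given $\{N_t=n\}$, the jump times are distributed as the order statistics of $n$ i.i.d.\ uniform variables on $[0,t]$, with joint density $n!/t^n$ on the simplex $\{0<t_1<\cdots<t_n<t\}$.

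First, I would compute $\Lambda_t$ explicitly on the event $\{N_t=n,\vec{T}=\vec{t}\}$. Since $\lambda_u=\sum_{i=1}^{N_u}X_i$ is piecewise constant and equal to $X_1+\cdots+X_k$ on $[t_k,t_{k+1})$ (with $t_0:=0$, $t_{n+1}:=t$), a direct rearrangement, or equivalently one pathwise Fubini step applied to the representation $\lambda_u=\sum_i X_i\ind_{\{T_i\leq u\}}$, yields
\beq
\Lambda_t=\sum_{i=1}^n X_i(t-t_i).
\eeq
Because the $X_i$ are mutually independent and independent of $N$, the conditional Laplace transform factorises across jumps:
\beq
\E[\e^{-x\Lambda_t}\mid N_t=n,\vec{T}=\vec{t}]=\prod_{i=1}^n \E[\e^{-x(t-t_i)X_i}]=\prod_{i=1}^n\phi_{X_i}(x(t-t_i)).
\eeq

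Second, I would integrate out $\vec{T}$ against the order-statistic density $n!/t^n$ on the simplex, and then unwind the conditioning on $N_t$ via the law of total expectation:
\beqn
\phi_{\Lambda_t}(x)&=&\sum_{n\geq 0}\Q(N_t=n)\,\E[\e^{-x\Lambda_t}\mid N_t=n]\\
&=&\sum_{n\geq 0}\e^{-\theta t}\frac{(\theta t)^n}{n!}\cdot\frac{n!}{t^n}\int_{0<t_1<\cdots<t_n<t}\prod_{i=1}^n\phi_{X_i}(x(t-t_i))\,d\vec{t}.
\eeqn
The factor $n!/t^n$ coming from the order-statistic density cancels $(\theta t)^n/n!$ exactly, leaving only $\theta^n$ in each summand. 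Isolating the $n=0$ term (which equals $1$) and re-expressing the integral over the ordered simplex as the iterated integral with bounds $t_i\in[t_{i-1},t]$ (setting $t_0:=0$) produces precisely~(\ref{eq:PhiGCP}).

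The main obstacle is really just identifying the right structural fact: once the order-statistic characterisation of Poisson jump times is invoked, the remainder is a single line of cancellation. There is one tacit integrability assumption ensuring $\phi_{\Lambda_t}(x)$ and all the $\phi_{X_i}$'s are defined on the relevant range of $x$, under which both the pathwise Fubini reduction of $\Lambda_t$ and the expansion over $\{N_t=n\}$ are standard; I would state this integrability hypothesis but not belabour it.
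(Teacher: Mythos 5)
Your proof is correct and follows essentially the same route as the paper: both condition on $(N_t,\vec{T}_{N_t})$, reduce $\Lambda_t$ pathwise to $\sum_{i=1}^{N_t}X_i(t-T_i)$, and factorize the conditional Laplace transform into $\prod_i\phi_{X_i}(x(t-t_i))$ before integrating over the ordered simplex. The only (cosmetic) difference is that you obtain the weight $\theta^n\e^{-\theta t}$ via the uniform order-statistics characterization of the jump times given $\{N_t=n\}$, whereas the paper derives the joint density of $(\vec{T}_n,N_t)$ directly from the exponential inter-arrival times and the event $\{T_n\leq t<T_{n+1}\}$ --- the two computations produce the identical measure, and your cancellation of $n!/t^n$ against $(\theta t)^n/n!$ is exactly the paper's $\theta^n\e^{-\theta t}$.
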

\begin{proof}
The law of $N_t$ is that of a $Poisson(\theta t)$ random variable, that is
\beq
p_{N_t}(n)=\frac{(\theta t)^n\e^{-\theta t}}{n!},~~n\in\{0,1,2,\ldots\}
\eeq
Furthermore, the laws of the time arrival of the $i$-th jump, $T_i$, and the time $\Delta_i$ elapsed between jumps $i$ and $i+1$ are known
\beqn
\Delta_{i+1}:=T_{i+1}-T_i&\sim& Exp(\theta)=Gamma(1,\theta)\\
T_{i}&=& Gamma(i,\theta)
\eeqn
where we have set $T_0:=0$. Let us further note $\vec{T}_n=(T_1,\ldots,T_n)$ and $\vec{t}_n=(t_1,\ldots,t_n)$. Then, due to the independence between the times separating two consecutive jumps, we compute the joint density $p_{\vec{T}}(\vec{t}_n)$ of the first $n$ jump times $\vec{T}_n$:
\beqn
p_{\vec{T}_n}(\vec{t}_n)=\ind_{\{\vec{t}_n\}}\prod_{i=1}^{n}\theta\e^{-\theta(t_{i}-t_{i-1})}=\ind_{\{\vec{t}_n\}}\theta^n \e^{-\theta t_n}
\eeqn
where $t_0:=0$ and $\ind_{\{\vec{t}_n\}}$ is 1 if $\vec{t}_n$ a non-decreasing sequence of nonnegative numbers and 0 otherwise.

We can then compute the joint density $p_{\vec{T}_n,N_t}(\vec{t}_n,n)$ of $(\vec{T}_n,N_t)$ at $(\vec{t}_n,n)$ using $\{N_t=n\}\Leftrightarrow\{T_n\leq t,T_{n+1}>t\}$ as follows:
\beqn
p_{{\vec{T}}_n,N_t}(\vec{t}_n,n)&=&\ind_{\{\vec{t}_n\}}\Q(\vec{T}_n\in d\vec{t}_n,T_n\leq t,T_{n+1}>t)\\
&=&\ind_{\{\vec{t}_n\}}\ind_{\{t\geq t_n\}}\Q(\vec{T}_n\in d\vec{t}_n,T_{n+1}>t)\\
&=&\ind_{\{\vec{t}_n\}}\ind_{\{t\geq t_n\}}\int_t^\infty p_{\vec{T}_{n+1}}(\vec{t}_n,s)ds\\
&=&\ind_{\{\vec{t}_n\}}\ind_{\{t\geq t_n\}}\theta^n \e^{-\theta t}
\eeqn
This expression depends on $\vec{t}_n$ only via the indicator function $\ind_{\{\vec{t}_n\}}\ind_{\{t\geq t_n\}}$. We can now compute the Laplace transform of $\Lambda_t$ by conditioning on $(\vec{T}_n,N_{t})$ :
\beqn
\phi_{\Lambda_t}(x)&=&\E[\e^{-x\int_0^t\sum_{i=1}^{N_u}X_i}du]\\
&=&p_{N_t}(0)+\sum_{n=1}^{\infty}\E[\e^{-x\sum_{i=1}^{n}X_i(t-t_i)}|\vec{T}_n=\vec{t}_n,N_{t}=n]p_{\vec{T}_n,N_t}(\vec{t}_n,n)\\
&=&p_{N_t}(0)+\sum_{n=1}^{\infty}\int_{t_1=0}^\infty\ldots\int_{t_n=0}^\infty\prod_{i=1}^{n}\phi_{X_i}(x(t-t_i))p_{{\vec{T}},N_t}(\vec{t}_n,n)d\vec{t}_n\\
&=&\e^{-\theta t}+\sum_{n=1}^{\infty}\theta^n \e^{-\theta t}\int_{t_1=0}^t\ldots\int_{t_n=t_{n-1}}^t\prod_{i=1}^{n}\phi_{X_i}(x(t-t_i))d\vec{t}_n
\eeqn
\end{proof} 
This form is appealing for two reasons. First, only the product of the characteristic functions of the jump size variables appear in the multiple integral as $p_{\vec{T}_n,N_t}$ can be sent outside provided that we restrict the integration domain according to the indicator function $\ind_{\{\vec{t}_n\}}\ind_{\{t\geq t_n\}}$. Second, this multiple integral can be written as
\beq
\phi_{\Lambda_t}(x)=\e^{-\theta t}+\e^{-\theta t}\sum_{n=1}^{\infty}\theta^n \underbrace{\int_{t_1=0}^t\phi_{X_1}(x(t-t_1))\ldots\underbrace{\int_{t_n=t_{n-1}}^t\phi_{X_n}(x(t-t_n))}_{:=I_n}}_{I_1}d\vec{t}_n\label{eq:gcp}
\eeq
Therefore, the multiple integral can be computed recursively: only the last characteristic function $\phi_{X_n}$ is involved in the last integral (noted $I_n$) which only depends on time via $(t_{n-1},t)$. The previous integrals, $I_{n-k}$, involves the product of $\phi_{X_{n-k}}(t-s)$ and $I_{n-k}(s,t)$, which limits the complexity of the computation.

Observe that when $x=0$, $\phi_{X_i}(x(t-t_i))=1$ and the multiple integral collapses $t^n/n!$, so that
\beqn
\sum_{n=1}^{\infty}\frac{(t\theta)^n}{n!}=\e^{t\theta}-1
\eeqn
and $\phi_{\Lambda_t}(0)=\e^{-\theta t}(1+\e^{t\theta}-1)=1$ as it should.
\begin{corollary} The Laplace transform of the integrated process
\beq
\Lambda_{s,t}=\int_s^t\sum_{i=1}^{N_u}X_i du
\eeq
is given by
\beq
\phi_{\Lambda_{s,t}}(x)=\e^{-x(t-s)\lambda_{s}}\phi_{\tilde{\Lambda}_{t-s}}(x)
\eeq
where $\phi_{\tilde{\Lambda}_{t}}(x)$ is as $\phi_{\Lambda_{t}}(x)$ in eq.~(\ref{eq:PhiGCP}) but with  $X_i\leftarrow X_{N_s+i}$.
\end{corollary}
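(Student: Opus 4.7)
The plan is to reduce the corollary to the proposition by exploiting the strong Markov (memoryless) property of the underlying Poisson process, which allows the increment over $[s,t]$ to be decomposed into a deterministic part depending on $\lambda_s$ and an independent copy of a generalized compound Poisson integral starting fresh at time $s$.

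First I would decompose $\lambda_u$ for $u\in[s,t]$ as
\beq
\lambda_u = \sum_{i=1}^{N_u}X_i = \sum_{i=1}^{N_s}X_i + \sum_{i=N_s+1}^{N_u}X_i = \lambda_s + \tilde{\lambda}_{u-s},
\eeq
where I define $\tilde{\lambda}_{v}:=\sum_{i=1}^{\tilde N_v}X_{N_s+i}$ with $\tilde N_v := N_{s+v}-N_s$. Integrating over $[s,t]$ gives
\beq
\Lambda_{s,t} = (t-s)\lambda_s + \int_0^{t-s}\tilde{\lambda}_v\,dv =: (t-s)\lambda_s + \tilde{\Lambda}_{t-s}.
\eeq

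Next, by the memoryless property of the exponential interarrival times of $N$, the shifted counting process $\tilde N_v$ is itself a Poisson process of intensity $\theta$, and since $\tilde N$ and the jump sizes $X_{N_s+1},X_{N_s+2},\ldots$ are independent of $\mathcal{F}_s$, the process $\tilde{\lambda}$ is a generalized compound Poisson process of exactly the form treated in the preceding proposition, but with the jump-size sequence relabelled as $X_i\leftarrow X_{N_s+i}$. Consequently the law of $\tilde{\Lambda}_{t-s}$ conditional on $\mathcal{F}_s$ coincides with the law of $\Lambda_{t-s}$ computed in the proposition with this relabelled sequence, and its Laplace transform is $\phi_{\tilde{\Lambda}_{t-s}}(x)$ as defined in the statement.

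Finally, since $(t-s)\lambda_s$ is $\mathcal{F}_s$-measurable and independent of $\tilde{\Lambda}_{t-s}$, the conditional Laplace transform factorises:
\beq
\phi_{\Lambda_{s,t}}(x)=\E\!\left[\e^{-x(t-s)\lambda_s}\e^{-x\tilde{\Lambda}_{t-s}}\,\Big|\,\mathcal{F}_s\right]=\e^{-x(t-s)\lambda_s}\,\phi_{\tilde{\Lambda}_{t-s}}(x),
\eeq
which is the claim. The only delicate point is justifying that the post-$s$ jump structure is indeed a fresh compound Poisson process independent of $\mathcal{F}_s$; this is a standard consequence of the strong Markov property of $N$ at the deterministic time $s$, and once it is invoked the remainder of the argument is immediate.
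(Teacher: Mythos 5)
Your proof is correct and follows essentially the same route as the paper: split the sum at $N_s$, shift time by $s$ using the stationary, independent increments of the Poisson process, and factor out the deterministic term $(t-s)\lambda_s$. You are in fact slightly more careful than the paper, which only invokes stationarity of increments, whereas the factorisation of the Laplace transform also requires the independence of the post-$s$ jump structure from $\mathcal{F}_s$ that you make explicit.
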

\begin{proof}
By definition,
\beq
\int_s^t\sum_{i=1}^{N_u}X_i du=(t-s)\lambda_s+\int_s^t\sum_{i=N_s+1}^{N_u}X_i du=(t-s)\lambda_s+\int_s^t\sum_{i=1}^{N_u-N_s}X_{N_s+i} du
\eeq
The Poisson process has stationary increments, so the distribution of $N_u-N_s\sim N_{u-s}$ and
\beq
\int_s^t\sum_{i=1}^{N_{u}}X_{i} du\sim (t-s)\lambda_s+\int_0^{t-s}\sum_{i=1}^{N_v}X_{N_s+i} dv
\eeq
leading to
\beq
\E\left[\e^{-x\int_s^t\sum_{i=1}^{N_u}X_i du}\right]=\e^{-x(t-s)\lambda_s}\phi_{\tilde{\Lambda}_{t-s}}(x)
\eeq
\end{proof}
%
\subsection{Compound Poisson process with Gamma jump sizes}
%
The above expression for the Generalized Compound Poisson process is appealing but is hard to solve as no recursive formula can be found for solving the multiple integral. Even in the case where the $X_i\sim Exp(\lambda_i)$, the resulting integrals seems intractable. However, when the $X_i$'s are iid, recursion formula can be found as shown below.

Beforehand, observe however that Compound Poisson processes being L\'evy processes with characteristic exponent
\beq
\psi_{\lambda}(x)=\int_\mathbb{R}(\e^{ixz}-1)\nu(dz)
\eeq
where $\nu(dx)=\theta f(dx)$ and $f$ is the jump size distribution we have, from Corollary 2,
\beq
\psi_{\Lambda_{s,t}}(x)=ix\lambda_s(t-s)+\int_s^t\int_{-\infty}^\infty (\e^{izx(t-u)}-1)\nu(dz)dv\label{eq:LevyCP}
\eeq
Solving this simple time-space integral leads the Laplace transform of $\Lambda_{s,t}$. This approach seems considerably simpler than computing the infinite sum of multiple integrals (of arbitrarily large dimension) in eq.~\ref{eq:gcp}.

However, when $X_i\sim Gamma(\kappa,\alpha)$, a recursion formula can be found and the infinite sum of multiple integrals can be solved, and we shall adopt this alternative methodology for the sake of illustration. The results obviously agree with those easily obtained by computing eq.(\ref{eq:LevyCP}) with $\nu(dz)=\theta\frac{\kappa^\alpha}{\Gamma(\alpha)}z^{\alpha-1}\e^{-\kappa z}dz$. 
\begin{example}[Compound Poisson process with exponentially-distributed jump sizes]
Let $N_t$ be a Poisson process with intensity $\theta$ and $X_i\sim Exp(\kappa)=Gamma(\kappa,1)$ be iid exponential variables with Laplace transform $\phi_X(x)=\phi_\gamma(x;\kappa,1)$. Given that $\phi_{\tilde{\Lambda}_{t-s}}(x)=\phi_{\Lambda_{t-s}}(x)$ in the case of iid $X_i$, it is enough to compute $\phi_{\tilde{\Lambda}_{t}}(x)$. Then, the integral $\Lambda_t$ of the compound Poisson process $\lambda_t:=\sum_{i=1}^{N_t} X_i$ is
\beqn
\phi_{\Lambda_t}(x)&=&\left(\e^{-t}\phi_\gamma(xt;\kappa,-\kappa/x)\right)^\theta
\eeqn
\end{example}
To see this, notice that
\beq
\int_{s=t_{n-1}}^t\phi_{\gamma}(x(t-s))ds=-\frac{\kappa}{x}\left[\ln(\kappa+x(t-s))\right]^{t}_{t_{n-1}}=-\frac{\kappa}{x}\ln\phi_\gamma(x(t-t_{n-1}))
\eeq
Integrating that in the previous integral, we get
\beqn
\int_{s=t_{n-2}}^t\phi_{\gamma}(x(t-s))\int_{u=s}^t\phi_{\gamma}(x(t-u))duds&=&-\frac{\kappa}{x}\int_{s=t_{n-2}}^t\phi_{\gamma}(x(t-s))\ln\phi_\gamma(x(t-s))ds\nonumber\\
&=&+\frac{\kappa}{x}\int_{u=0}^{t-t_{n-2}}\phi_{\gamma}(xu)\ln\phi_\gamma(xu)du\\
&=&\left(-\frac{\kappa}{x}\right)^2\int_{u=0}^{\phi_{\gamma}(x(t-t_{n-2}))}\frac{\ln v}{v}dv\\
&=&\frac{1}{2}\left(-\frac{\kappa\ln \phi_{\gamma}(x(t-t_{n-2}))}{x}\right)^2
\eeqn
Finally, using $\int x^{-1}\ln^n x=\frac{\ln^{n+1}x}{n+1}dx$ and $t_0=0$,
\beqn
\int_{t_1=0}^t\phi_{X_1}(x(t-t_1))\ldots\int_{t_n=t_{n-1}}^t\phi_{X_n}(x(t-t_n))d\vec{t}_n&=&\frac{1}{n!}\left(-\frac{\kappa\ln\phi_\gamma(xt)}{x}\right)^n
\eeqn
%
%
\begin{example}[Compound Poisson process with gamma-distributed jump sizes.]
Let $X_i\sim Gamma(\kappa,\alpha)$ be iid gamma random variables with Laplace transform $\phi_X(x)=\phi_\gamma(x;\kappa,\alpha)$ for $x<\kappa$ and $\alpha>1$ (the case $\alpha=1$ corresponds to the exponential above). Then, 
\beqn
\phi_{\Lambda_t}(x)&=&\e^{-\theta\left(t-\frac{\kappa}{x(\alpha-1)}\phi_{\gamma}(xt;\kappa,\alpha-1)\right)}
\eeqn
\end{example}
The above expression can be found easily by noting that
\beq
\int_{s=u}^t\phi_{\gamma}(x(t-s))ds=\int_{s=u}^t\phi_{\gamma}^\alpha(x(t-s);\kappa,1)ds=\frac{\kappa}{x(\alpha-1)}\phi_{\gamma}^{\alpha-1}(x(t-u);\kappa,1)
\eeq
so that
\beqn
\int_{t_1=0}^t\phi_{X_1}(x(t-t_1))\ldots\int_{t_n=t_{n-1}}^t\phi_{X_n}(x(t-t_n))d\vec{t}_n&=&\frac{\left(\kappa/x\right)^{n}\phi_{\gamma}^{n(\alpha-1)}(xt;\kappa,1)}{\prod_{i=1}^n i(\alpha-1)}\nonumber\\
&=&\frac{\left(\kappa\phi_{\gamma}(xt;\kappa,\alpha-1)\right)^{n}}{n!(x(\alpha-1))^n}
\eeqn
%
%
%
\appendix
%
\section{Proof of Proposition~\ref{prop:gauss}}
\label{proof:prop:gauss}
%
It is easy to find the explicit expression of the strong solution $\lambda_{t}$ of eq.~(\ref{eq:SDEg}) with initial value $\lambda_s$, $s\leq t$. Setting,
\beq
G(t,T):=\e^{-\int_t^T\beta(s)ds}=1/G(T,t)
\eeq
we get
\beqn
\lambda_{t}&=&G(s,t)(\lambda_s+I(s,t)+J(s,t))\\
I(s,t)&:=&\int_{s}^t \alpha(u)G(u,s)du\\
J(s,t)&:=&\int_{s}^t \sigma(u)G(u,s)dW_u
\eeqn
In particular, $\lambda_{t}$ given $\lambda_s$ is normally distributed with mean
\beq
m(s,t)=G(s,t)\left(\lambda_s+I(s,t)\right)
\eeq
The variance is easily obtained by noting that $\E[J(s,t)]=0$ and using Ito isometry:
\beqn
v(s,t)&=&G^2(s,t)\int_{s}^t \sigma^2(u)\e^{\int_{s}^u 2\beta(v)dv} du=G^2(s,t)\int_{s}^t \left(\frac{\sigma(u)}{G(s,u)}\right)^2 du=\int_{s}^t \left(\sigma(u)G(u,t)\right)^2 du\nonumber
\eeqn
One can then plug this expression in the integration by parts formula. The resulting expression is again a Ito integral with a deterministic offset:
\beq
\Lambda_{t,T}=T\lambda_T-t\lambda_t-\int_{t}^Tud\lambda_u=(T-t)\lambda_t+\int_{t}^T(T-u)d\lambda_u
\eeq
Using eq.~(\ref{eq:SDEg}), the integral becomes
\beq
\int_{t}^T(T-u)d\lambda_u=\int_t^T(T-u)\left(\alpha(u)-\beta(u)\lambda_u\right)du+\int_t^T(T-u)\sigma(u)dW_u
\eeq
The first integral is
\beqn
\int_t^T(T-u)\left(\alpha(u)-\beta(u)G(t,u)(\lambda_t+I(t,u))\right)du-\int_t^T(T-u)\beta(u)G(t,u)J(t,u)du
\eeqn
All these integrals are deterministic integrals except the last one
\beqn
&&\int_t^T(T-u)\beta(u)G(t,u)\int_t^u\sigma(v)G(v,t)dW_vdu
\\&=&\int_t^T\int_t^u\sigma(v)G(v,t)dW_vd_u\left(\int_t^u(T-s)\beta(s)G(t,s)ds\right)\\
&=&\left(\int_t^T\sigma(v)G(v,t)dW_v\right)\left(\int_t^T(T-s)\beta(s)G(t,s)ds\right)\nonumber\\
&&~~~~~~~~~~~~~~~~-\int_t^T\sigma(u)G(u,t)\left(\int_t^u(T-s)\beta(s)G(t,s)ds\right)dW_u\\
&=&\int_t^T\sigma(u)G(u,t)\left(\int_u^T(T-s)\beta(s)G(t,s)ds\right)dW_u
\eeqn
So the expression for $\Lambda_{t,T}$ is the sum of deterministic functions
\beq
M(t,T)=(T-t)\lambda_t+\int_t^T(T-u)\left(\alpha(u)-\beta(u)G(t,u)(\lambda_t+I(t,u))\right)du
\eeq
and stochastic integrals
\beqn
S(t,T)&=&\int_t^T\sigma(u)\left((T-u)+G(u,t)\int_u^T(s-T)\beta(s)G(t,s)ds\right)dW_u\nonumber\\
&=&\int_t^T\sigma(u)\left(\int_u^T G(u,s)ds\right)dW_u\\
&=&\int_t^TK(u,T)dW_u
\eeqn
setting $K(s,t):=\int_s^t \sigma(s)G(s,u)du$.

The integral $\Lambda_{s,t}$ is Normally distributed with mean $M(s,t)$ given above and variance 
\beqn
V(s,t)&=&\int_s^tK^2(u,t)du
\eeqn
%
\section{Proof for Example~\ref{ex:Bridge}}
\label{proof:ex:Bridge}
%
It is trivial to see that
\beq
G(t,u)=\frac{T-u}{T-t}~,~~~K(t,T)=\sigma\int_{t}^TG(t,u)du=\sigma\frac{T-t}{2}
\eeq
Furthermore,
\beqn
M(t,T)&=&(T-t)\lambda_t+\int_t^T(T-u)\left(\gamma(T-u)-\frac{1}{T-u}G(t,u)(\lambda_t+I(t,u)\right)du\\
&=&(T-t)\lambda_t+\gamma\int_t^T(T-u)^2du-\lambda_t\int_{t}^TG(t,u)du-\int_t^T\int_t^u\gamma(T-v)G(v,u)dvdu\nonumber\\
&=&\lambda_t\frac{T-t}{2}+\frac{\gamma}{3}(T-t)^3-\gamma\int_t^T\int_t^u(T-u)dvdu\\
&=&\lambda_t\frac{T-t}{2}+\frac{\gamma}{3}(T-t)^3-\gamma\int_t^T(T-u)(t-u)du\\
&=&\lambda_t\frac{T-t}{2}+\frac{\gamma}{3}(T-t)^3-\gamma\frac{(T-t)^3}{6}du\\
&=&\lambda_t\frac{T-t}{2}+\frac{\gamma}{6}(T-t)^3
\eeqn
\newpage
%
\section{Proof of Proposition~\ref{prop:stochint}}
\label{proof:prop:stochint}
%
We give the proof for $s=0$ but the development remains valide for any lower bound $0\leq s\leq t$. Every L\'evy process with L\'evy measure $\nu$ can be written as (Prop. 3.7 Cont \& Tankov)
\beq
X_t=\gamma t + \sigma W_t + \int_0^t \int_\mathbb{R} x J_X(ds\times dx)  - \int_0^t \int_{|x|\leq1} x \mu(ds\times dx)
\eeq
where $J_X(ds\times dx)$ is a Poisson random measure with intensity $\mu(ds\times dx)=\nu(dx)ds$. The differential of this L\'evy process is
\beq
dX_s=\gamma ds + \sigma dW_s + \int_\mathbb{R} x J_X(ds\times dx)  - \int_{|x|\leq1} x \mu(ds\times dx)
\eeq
So that for any integrable deterministic function $f(s)$, the stochastic integral $Y_t=\int_0^t f(s)dX_s$ takes the form
\beq
Y_t=\gamma\int_0^t f(s)ds + \sigma \int_0^t f(s) dW_s + \int_0^t \int_\mathbb{R} x f(s) J_X(ds\times dx)  - \int_0^t \int_{|x|\leq1} x f(s) \nu(dx) ds
\eeq
The characteristic exponent of $X=X_1$ is
\beqn
\psi_{X}(u)&=&i\gamma u - \frac{\sigma^2  u^2}{2} + \int_\mathbb{R} (\e^{iux}-1)\nu(dx)  - \int_{|x|\leq1} iux \nu(dx)\\ 
&=&i\gamma u - \frac{\sigma^2 u^2}{2} + \int_\mathbb{R} (\e^{iux}-1- iux\ind_{\{|x|\leq1\}}) \nu(dx)
\eeqn
Therefore, the log-characteristic function $\psi_{Y_t}(u)=\ln \varphi_{Y_t}(u)$ of the stochastic integral $Y_t$ becomes
\beqn
\psi_{Y_t}(u)
&=&\int_0^t \left(i\gamma (uf(s)) -\frac{\sigma^2 (uf(s))^2}{2} + \int_\mathbb{R} \left(\e^{ix (uf(s))}-1 -   ix(uf(s))\ind_{\{|x|\leq1\}}\right) \nu(dx)\right)ds\nonumber\\
&=&\int_0^t \psi_{X}(u f(s))ds
\eeqn
where we have used the exponential formula for Poisson random measure $\E\left[\e^{\int_{[0,T]\times B} g(s,x)J_X(ds\times dx)}\right]=\e^{\int_{[0,T]\times B} (\e^{g(s,x)}-1)\mu(ds\times dx)}$.\footnote{We thank P. Tankov for discussions on this formula when the integral is jointly taken over the time-space set.}
%
\section{Proof for Example~\ref{ex:GammaOU}}
%
Setting $x'=ix$,
\beqn
\psi_{\gamma}\left(\frac{1-\e^{-\beta (t-u)}}{\beta}x\right)&=&\alpha\ln(\kappa\beta)-\alpha\ln\left(\kappa\beta -x'+x'\e^{-\beta (t-u)}\right)\\
&=&\alpha\ln(\kappa\beta)-\alpha\ln\left(1+\frac{x'\e^{-\beta (t-u)}}{\kappa\beta -x'}\right)-\alpha\ln(\kappa\beta -x')\\
&=&\psi_\gamma(x/\beta)-\alpha\ln\left(1+\frac{x'\e^{-\beta (t-u)}}{\kappa\beta -x'}\right)
\eeqn
Set $v:=-\frac{x'\e^{-\beta (t-u)}}{\kappa\beta -x'}$, $v_1:=-\frac{x'\e^{-\beta (t-s)}}{\kappa\beta -x'}$ and $v_2:=-\frac{x'}{\kappa\beta -x'}$
\beqn
\int_s^t\psi_{\gamma}\left(\frac{1-\e^{-\beta (t-u)}}{\beta}x\right)du&=&(t-s)\psi_\gamma(x/\beta)-\alpha\int_s^t\ln\left(1+\frac{x'\e^{-\beta (t-u)}}{\kappa\beta -x'}\right)du\nonumber\\
&=&(t-s)\psi_\gamma(x/\beta)+\frac{\alpha}{\beta}\int_{v_1}^{v_2}\frac{\ln\left(1-v\right)}{v}dv\\
&=&(t-s)\psi_\gamma(x/\beta)+\frac{\alpha}{\beta}\left(Li_2(v_2)-Li_2(v_1)\right)
\eeqn

\ifdefined \MyBib
  \bibliographystyle{plain}
	\bibliography{\MyBib}
\fi
\end{document}